\newtheorem{theorem}{Theorem}[section]
\newtheorem{corollary}[theorem]{Corollary}
\newtheorem{lemma}[theorem]{Lemma}
\newtheorem{remark}[theorem]{Remark}
\newtheorem{defi}[theorem]{Definition}
\newcommand{\scal}[2]{\left\langle #1,#2 \right\rangle}
\newcommand{\g}{\nabla}
\newcommand{\gp}{\nabla^\perp}
\newcommand{\di}{\mathrm{div}}
\newcommand{\lap}{\Delta}
\newcommand{\dr}{\partial}
\newcommand{\vol}{\mathrm{vol}}
\newcommand{\tr}{\mathrm{tr}}
\newcommand{\Conf}{\mathrm{Conf}}
\newcommand{\Imm}{\mathrm{Imm}}
\DeclareMathOperator{\osc}{osc}
\newcommand{\R}{\mathbb{R}}
\newcommand{\C}{\mathbb{C}}
\newcommand{\N}{\mathbb{N}}
\newcommand{\s}{\mathbb{S}}
\newcommand{\Sr}{\mathcal{S}}
\newcommand{\Ar}{\mathcal{A}}
\newcommand{\Er}{\mathcal{E}}
\newcommand{\Cr}{\mathcal{C}}
\newcommand{\Qr}{\mathcal{Q}}
\newcommand{\Arond}{\mathring{A}}
\newcommand{\inter}[2]{[\![#1,#2]\!]}
\newcommand{\ust}{\underset}
\newcommand{\aleq}{\lesssim}
\newcommand{\vp}{\varphi}
\newcommand{\ve}{\varepsilon}
\newcommand{\vn}{\vec{n}}
\title{Classification of branched Willmore spheres}
\author{Dorian Martino}
\address[Dorian Martino]{Institut de Mathématiques de Jussieu, Université Paris Cité, Bâtiment
	Sophie Germain, 75205 Paris Cedex 13, France}
\email{dorian.martino@imj-prg.fr}
\begin{document}
	
	\begin{abstract}
		Given a branched Willmore immersion from a closed Riemann surface, we show that Bryant's quartic is holomorphic. Consequently, this quartic vanishes when the underlying surface is a sphere and we obtain the full classification of branched Willmore spheres. To do so, we show that the asymptotic expansion in the $C^2$-topology of the conformal Gauss map at a branched point is a null straight line.
	\end{abstract}
	
	\maketitle
	
	\section{Introduction}
	
	Let $\Sigma$ be a closed Riemann surface. Given an immersion $\Phi : \Sigma\to \R^3$, its Willmore energy is given by
	\begin{align*}
		\Er(\Phi) &:= \int_\Sigma |\Arond_\Phi|^2_{g_\Phi} d\vol_{g_\Phi},
	\end{align*}
	where $\Arond_\Phi$ is the traceless part of the second fundamental form of $\Phi(\Sigma)$ and $g_\Phi := \Phi^*\xi$ is the metric induced by $\Phi(\Sigma)\subset (\R^3,\xi)$, with $\xi$ the flat metric on $\R^3$. An immersion $\Phi : \Sigma\to \R^3$ is said to be Willmore if it is a critical point of $\Er$. Thanks to the Gauss\textendash Bonnet formula, one could equivalently define Willmore surfaces as critical points of the following functional:
	\begin{align*}
		W(\Phi) = \int_{\Sigma} H^2_{\Phi}\, d\vol_{g_\Phi},
	\end{align*}
	where $H_\Phi = \frac{1}{2} \tr_{g_\Phi} A_{\Phi}$ is the mean curvature. When $\Sigma = \s^2$, Bryant \cite{bryant1984} proved that any smooth critical point of $\Er$ must be a minimal surface or the inversion of a minimal surface. To do so, he considers the conformal Gauss map $Y:\Sigma\to (\s^{3,1},\eta)$ of any immersion $\Phi : \Sigma\to \R^3$, see \cref{se:CGM} for a definition of the conformal Gauss map, where 
	\begin{align*}
		\eta := (dx^1)^2 + \cdots (dx^4)^2 - (dx^5)^2,&  & 	\s^{3,1} := \{x\in \R^5 : |x|^2_\eta = 1\}.
	\end{align*}
	He proved that $\Phi$ is a smooth Willmore immersion if and only if $Y$ is a smooth harmonic and conformal map. As a consequence, he shows that the quartic $\Qr := \scal{\dr^2_{zz} Y}{\dr^2_{zz} Y}_\eta (dz)^4$ is holomorphic and vanishes everywhere if and only if $\Phi$ is a conformal transformation of a minimal surface in $\R^3$. If $\Sigma = \s^2$, the only holomorphic quartic is $0$. Therefore, any smooth Willmore immersion of $\s^2$ must be a conformal transformation of a minimal surface.\\
	
	However when studying the compactness of Willmore immersions or the Willmore flow \cite{bernard2014,blatt2009,kuwert2012,laurain2018a,palmurella2022,riviere2020}, singularities arise. In particular, the so-called bubbles are, by construction, Willmore spheres with possible branch points, see \cref{se:CGM} for the definition of branch point and more precisely \Cref{def:branch_point} for the definition of a branched Willmore immersion. Another motivation to study branched Willmore spheres comes from the $16\pi$-conjecture originally formulated by Kusner, which states the following. The value of the min-max procedure between the round sphere and the round sphere with opposite orientation, should be the energy of a simply connected complete minimal surface with four planar ends. Furthermore, the optimal path should be realized by a Willmore flow. We refer to \cite[Open problem 3]{riviere2020}, \cite{palmurella2022} and to all the references therein for recent advances on the study of min-max procedures and the Willmore flow. When the underlying Riemann surface is $\s^2$, Rivière proved in \cite[Theorem I.1]{riviere2020} that min-max procedures for the Willmore energy are reached by branched Willmore spheres. In \Cref{cor:classification} below, we complete the classification of branched Willmore spheres. We prove that every branched Willmore sphere is a conformal transformation of a possibly branched minimal surface of $\R^3$. Hence, the energy of branched Willmore spheres is quantized, see for instance \cite[Sections 4-5]{bryant1984} or \cite[Corollary 2]{nguyen2012}, and we deduce that the values of min-max procedures are multiples of $4\pi$. We hope that the present work combined with the approach of Rivière \cite{riviere2020} and the works of \cite{hirsch2021,hirsch2023,michelat2020,michelat2021}, studying the Willmore index of inverted minimal surfaces, would permit to solve \cite[Open problem 3]{riviere2020}.\\
	
	Some cases of branched Willmore spheres have already been studied. Lamm\textendash Nguyen \cite{lamm2015} proved that Willmore spheres with at most three branch points counted with multiplicities are still conformal transformations of minimal surfaces. Their proof is based on the asymptotic analysis at branch points from Kuwert\textendash Schätzle \cite{kuwert2004,kuwert2007} and Bernard\textendash Rivière \cite{bernard2013,bernard2013a}. Michelat\textendash Rivière \cite{michelat2022} extended this classification to branched spheres arising as limit of smooth Willmore immersions. An alternative proof has been given by Bernard\textendash Laurain\textendash Marque, see \cite[Lemma 3.1]{bernard2023}. \\
	
	Thanks to the $\ve$-regularity given by \cite[Corollary 1.1]{bernard2023}, one can easily recover the known fact, see also \cite{lamm2015,michelat2022}, that the poles of $\Qr$ have at most order 2. By a blow up around $x\approx 0$, it holds
	\begin{align*}
		|x|^4|\scal{\dr^2_{zz}Y(x)}{ \dr^2_{zz} Y(x)}_\eta| &\aleq \| |\g Y|_\eta \|_{L^2 \left( B_{2|x|}\setminus B_{ \frac{|x|}{2} } \right)}^2 \aleq \|\g\vn\|_{L^2 \left( B_{2|x|}\setminus B_{ \frac{|x|}{2} } \right) }^2.
	\end{align*}
	Thanks to \cite[Proposition 1.2]{bernard2013a}, it holds $\g \vn\in L^p(B_1)$ for any $p>1$. Hence, we obtain that the following holds for any $\ve>0$, around the branch point:
	\begin{align*}
		|x|^4|\scal{\dr^2_{zz} Y(x)}{\dr^2_{zz} Y(x)}_\eta| &\aleq |x|^{2-\ve}.
	\end{align*}
	Thus, any pole of $\Qr$ must be of order at most 2. To obtain a refined estimate, again we proceed to a blow up on the branch point. Using the asymptotic expansion obtained in \cite[Theorem 1.8]{bernard2013a}, we show that the conformal Gauss map always goes to infinity  in a neighbourhood of a branch point. The author proved in \cite[Theorem 4.1]{martino2023} that up to an isometry of $\s^{3,1}$, the conformal Gauss map converges to a null geodesic in the $C^2$-topology. In particular, we now obtain freely a much better estimate of $\Qr$, namely given $z\approx 0$, there exists $\Theta_z\in \Conf(\R^3)$ and an annuli $A_z \subset B_{|z|}\setminus \{0\}$ of the form $A_z = B_{R_z}\setminus B_{r_z}$ and such that for any $x\in A_z $,
	\begin{align*}
		|x|^4|\scal{\dr^2_{zz} Y(x)}{\dr^2_{zz} Y(x)}_\eta| &\aleq \|\g \vn_{\Theta_z\circ \Phi} \|_{L^2 \left( B_{2|x|}\setminus B_{ \frac{|x|}{2} } \right)}^4.
	\end{align*}
	The main work of this paper is to estimate the right-hand side. To understand this estimate, we will understand the impact of $\Theta_z$ on the conformal Gauss map. As shown in \cite{martino2023}, the conformal transformation $\Theta_z$ corresponds to the multiplication by a matrix $M_z \in SO(4,1)$ on the conformal Gauss map such that $M_z Y$ is bounded in $B_{2|z|}\setminus B_{\frac{|z|}{2}}$. Since $M_z Y$ is bounded, we obtain a uniform bound on $H_{\Theta_z\circ \Phi}$ on the domain $A_z$. Therefore, we have reduced the question of estimating $\|\g \vn_{\Theta_z\circ \Phi} \|_{L^2 \left( B_{2|x|}\setminus B_{ |x|/2 } \right)}$ to the problem of estimating
	\begin{align*}
		\int_{B_{2|x|}\setminus B_{ \frac{|x|}{2} }} d\vol_{g_{\Theta_z\circ \Phi}} + \int_{B_{2|x|}\setminus B_{ \frac{|x|}{2} }} |\Arond_{\Theta_z\circ\Phi}|^2_{g_{\Theta_z\circ \Phi}} d\vol_{g_{\Theta_z\circ \Phi}}.
	\end{align*} 
	The last term is conformally invariant, hence it is independent of $\Theta_z$. To estimate the first term, we use the convergence of the conformal Gauss map to a geodesic to prove that it is controlled by $\| M_z \g Y \|_{L^2( B_{2|x|}\setminus B_{ |x|/2 } )}^2$. Using an adapted $\ve$-regularity estimate, we conclude that this term is bounded by $\|\g \vn_{\Phi} \|_{L^2 ( B_{2|x|}\setminus B_{ |x|/2 } )}^2$. Thus, we end up with an estimate of the following form:
	\begin{align*}
		|x|^4|\scal{\dr^2_{zz} Y(x)}{\dr^2_{zz} Y(x)}_\eta| &\aleq \|\g \vn \|_{L^2 \left( B_{2|x|}\setminus B_{ \frac{|x|}{2} } \right) }^4.
	\end{align*}
	Again by \cite[Proposition 1.2]{bernard2013a}, we conclude that there is no pole of order 1 or more:
	\begin{align*}
		|x||\scal{\dr^2_{zz} Y(x)}{\dr^2_{zz} Y(x)}_\eta| \ust{x\to 0}{=}o(1).
	\end{align*}
	In order to use the results of \cite{martino2023}, we need to have an immersion defined on a closed surface. Our main result is the following.	
	
	\begin{theorem}\label{th:Qbounded}
		Let $\Sigma$ be a closed Riemann surface. Consider $\Phi : \Sigma\to \R^3$ a branched Willmore immersion. Assume that its Gauss map $\vn$ satisfies 
		\begin{align*}
			\int_{\Sigma} |\g \vn|^2_g d\vol_g <\infty,
		\end{align*}
		where $g$ is the first fundamental form of $\Phi(\Sigma)\subset \R^3$. Then Bryant's quartic $\Qr=\scal{\dr^2_{zz} Y}{\dr^2_{zz} Y}(dz)^4$ is holomorphic on $\Sigma$.
	\end{theorem}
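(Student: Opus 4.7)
My plan is to combine Bryant's classical holomorphicity away from branch points with a singularity removal argument at each branch point. Let $p_1,\ldots,p_N\in\Sigma$ denote the branch points of $\Phi$. On $\Sigma\setminus\{p_1,\ldots,p_N\}$, the immersion $\Phi$ is a smooth Willmore immersion, so the conformal Gauss map $Y$ is harmonic and conformal into $(\s^{3,1},\eta)$, and Bryant's computation shows that $\Qr$ is holomorphic on the punctured surface. The whole problem is therefore to prove that $\Qr$ extends holomorphically across each $p_i$; by Riemann's removable singularity theorem for holomorphic quartic differentials, it suffices to show $|z|\cdot|\langle \dr^2_{zz}Y,\dr^2_{zz}Y\rangle_\eta|(z)=o(1)$ as $z\to 0$ in a conformal chart centered at $p_i$.

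To obtain this estimate, I would follow exactly the strategy outlined in the introduction, turning each step into a rigorous local lemma. First, using the $\ve$-regularity of Bernard\textendash Laurain\textendash Marque and the Bernard\textendash Rivière asymptotic expansion, one shows that $|Y(z)|\to\infty$ as $z\to 0$, and by \cite[Theorem 4.1]{martino2023} there exists $M_z\in SO(4,1)$ (corresponding to a conformal transformation $\Theta_z\in\Conf(\R^3)$) so that $M_z Y$ is uniformly bounded on the dyadic annulus $A_z := B_{2|z|}\setminus B_{|z|/2}$ and converges, on an annular subdomain, to a null geodesic in $C^2$. Writing the standard pointwise bound in the conformal chart
\begin{align*}
|z|^4|\scal{\dr^2_{zz}Y(z)}{\dr^2_{zz}Y(z)}_\eta| \aleq \| |\g(M_z Y)|_\eta \|_{L^2(A_z)}^2 \aleq \|\g \vn_{\Theta_z\circ\Phi}\|_{L^2(A_z)}^2,
\end{align*}
the task reduces to estimating $\|\g\vn_{\Theta_z\circ\Phi}\|_{L^2(A_z)}$.

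Next, I would square this estimate using the convergence to a null geodesic: since $M_z Y$ is bounded on $A_z$, the mean curvature $H_{\Theta_z\circ\Phi}$ is uniformly bounded there, and the convergence gives that the induced volume on $A_z$ is controlled by $\|M_z\g Y\|_{L^2(A_z)}^2$, hence (by conformal invariance of $\int|\Arond|^2 d\vol$) by $\|\g\vn_\Phi\|_{L^2(A_z)}^2$. An adapted $\ve$-regularity estimate then upgrades this to
\begin{align*}
|z|^4|\scal{\dr^2_{zz}Y(z)}{\dr^2_{zz}Y(z)}_\eta| &\aleq \|\g\vn_\Phi\|_{L^2(A_z)}^4.
\end{align*}
Finally, \cite[Proposition 1.2]{bernard2013a} yields $\g\vn_\Phi\in L^p$ for all $p>1$ near the branch point, so in particular $\|\g\vn_\Phi\|_{L^2(A_z)}^4=o(|z|^3)$, giving the desired decay $|z|\cdot|\langle\dr^2_{zz}Y,\dr^2_{zz}Y\rangle_\eta|=o(1)$.

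The main obstacle, I expect, is the middle step: carefully tracking the conformal transformation $\Theta_z$ (which depends on $z$) while producing estimates that are uniform in $z$. In particular one must ensure that the annular domain on which $M_z Y$ is close to a null geodesic can be chosen to contain $A_z$ uniformly, that the ``adapted'' $\ve$-regularity is applicable in this setting (i.e. that the small-energy hypothesis $\|\g\vn_{\Theta_z\circ\Phi}\|_{L^2}\ll 1$ on slightly larger annuli is available), and that the volume estimate $\vol_{g_{\Theta_z\circ\Phi}}(A_z)\aleq\|M_z\g Y\|_{L^2(A_z)}^2$ is genuinely a consequence of the $C^2$ geodesic convergence rather than circular. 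Once these points are settled, the holomorphic extension of $\Qr$ across each $p_i$ is automatic, proving the theorem.
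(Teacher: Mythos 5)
Your strategy coincides with the paper's: reduce to the decay $|z|\,|\scal{\dr^2_{zz}Y}{\dr^2_{zz}Y}_\eta|=o(1)$ at each branch point, obtain it from the $C^2$ convergence of a renormalized conformal Gauss map to a null geodesic, and close with $\g\vn\in L^p$ for all $p$ (your final bookkeeping, $p=4$ giving $\|\g\vn\|_{L^2(A_z)}^4=o(|z|^3)$, is exactly the paper's). But what you defer as the ``main obstacle'' is precisely the mathematical content of the proof, and two of the deferred points are genuine gaps rather than routine verifications. The first is structural: \cite[Theorem 4.1]{martino2023} is stated for immersions of \emph{closed} surfaces (this hypothesis feeds an $L^\infty$ bound on $\Theta\circ\Phi$ obtained from a monotonicity formula), so a purely local argument near $p_i$ does not launch. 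The paper instead blows up globally, setting $\Phi_k(z)=\Phi(r_k/z)$ on all of $\Sigma$, and must extend both the monotonicity formula and Rivi\`ere's ``good ball'' lemma to branched immersions, checking that the boundary term created by excising the branch points vanishes thanks to $|\g\Phi|\sim |x|^\theta$. It must also prove that $\osc(MY)$ blows up at the branch point for \emph{every} $M\in SO(4,1)$ (via the Bernard--Rivi\`ere expansion of $\vec{H}$), since otherwise the normalization $\osc_{[0,\kappa_k]\times\s^1}M_kY_k=1$ that triggers the geodesic convergence cannot be imposed.

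The second gap is quantitative and is the heart of the matter: a volume bound of the form $\vol_{g_{\Theta_z\circ\Phi}}(A_z)\aleq \|M_z\g Y\|^2_{L^2(A_z)}$, as you state it, gives nothing, because $|\g(M_kY_k)|^2_\xi$ is exactly the size of the quantity you are trying to beat --- inserting it back into the $\ve$-regularity would produce $|\g(M_kY_k)|^4\aleq C(|\g(M_kY_k)|^2+\int|\Arond|^2)^2$ with a constant $C$ not at your disposal. The paper's \cref{lm:estimate_gvn} proves the strictly stronger estimate that the area is $o\left(|\g(M_kY_k)|^2_\xi\right)$, and this small-$o$ is what allows the area term to be absorbed, leaving only the conformally invariant $\int|\Arond|^2$. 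The mechanism is the degeneracy of the pullback metric as $\g(M_kY_k)$ becomes null: writing $d\vol_{g_{\omega\circ\Theta_k\circ\Phi_k}}$ in terms of $\sqrt{\det\scal{\dr_i(M_kY_k)}{\dr_j(M_kY_k)}_\eta}$ and reparametrizing by $ds=\left(\fint_{\s^1}|\g(M_kY_k)|^2_\xi\,d\vp\right)^{1/2}dt$, the area is controlled by $\int|\g^2(M_kY_k)|^2_\xi\big/\fint|\g(M_kY_k)|^2_\xi$, which is $o(|\g(M_kY_k)|^2_\xi)$ by the Hessian decay $\|\,|\g^2(M_kY_k)|_\xi/|\g(M_kY_k)|^2_\xi\|_{L^\infty(\Ar_k)}\to 0$ together with the Harnack-type comparability of $|\g(M_kY_k)|$ on the annulus. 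Without this step your chain of inequalities does not close, so the proposal as written is a correct plan but not yet a proof.
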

	
	If we consider $\Sigma=\s^2$, then $\Qr$ is a holomorphic quartic defined on $\s^2$. Hence $\Qr=0$ and we obtain the following result, which implies that the energy of branched Willmore spheres is quantized, see \cite[Corollary 2]{nguyen2012}. 
	
	\begin{corollary}\label{cor:classification}
		Any branched Willmore immersion of $\s^2$ into $\R^3$ is the inversion of a minimal surface.
	\end{corollary}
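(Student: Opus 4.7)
The plan is to apply \Cref{th:Qbounded} to $\Sigma = \s^2$ so as to force Bryant's quartic $\Qr$ to vanish identically on $\s^2$, and then to invoke Bryant's criterion characterizing Willmore immersions with $\Qr \equiv 0$ as conformal transformations of minimal surfaces.

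First I would verify the hypothesis of \Cref{th:Qbounded}, namely that $\int_{\s^2}|\g \vn|^2_g\, d\vol_g < \infty$ is automatic for branched Willmore immersions of $\s^2$. For a surface in $\R^3$, one has the pointwise identity $|\g \vn|^2_g = |A_\Phi|^2 = 4 H_\Phi^2 - 2 K_\Phi$, where $K_\Phi$ is the Gauss curvature. A branched Willmore immersion has $W(\Phi) < \infty$ by definition, and the Gauss--Bonnet formula for branched closed surfaces gives a finite value for $\int_{\s^2} K_\Phi\, d\vol_g$ (namely $4\pi$ minus $2\pi$ times the total branching order). Thus $\int_{\s^2}|\g\vn|^2_g\, d\vol_g < \infty$, and \Cref{th:Qbounded} implies that $\Qr$ is a holomorphic section of $K_{\s^2}^{\otimes 4}$.

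Next I would use that since $K_{\s^2}\cong \Or(-2)$, the bundle $K_{\s^2}^{\otimes 4}\cong \Or(-8)$ has no nonzero global holomorphic sections, forcing $\Qr \equiv 0$. It remains to deduce from this vanishing that $\Phi$ is the inversion of a minimal surface. Following Bryant \cite{bryant1984}, on $\s^2$ minus the finitely many branch points each point has a neighbourhood on which $\Phi$ is, up to a conformal transformation of $\R^3$, a minimal immersion. By real analyticity of smooth Willmore immersions and connectedness of the branch-free locus, these local conformal transformations agree and define a single element $\Theta \in \Conf(\R^3)$ such that $\Theta\circ \Phi$ restricted to $\s^2$ minus the branch points is a minimal immersion. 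The main remaining step is to propagate this identification across the branch points to realize $\Phi$ globally as the inversion of a possibly branched minimal surface; this is the part I expect to require the most care, but it is already controlled by the asymptotic analysis at branch points of \cite{bernard2013a} and \cite{martino2023} which underlies \Cref{th:Qbounded}, and corresponds to the standard step invoked in \cite[Corollary 2]{nguyen2012}.
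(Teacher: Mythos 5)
Your proposal is correct and follows essentially the same route as the paper: apply \Cref{th:Qbounded} with $\Sigma=\s^2$, note that the only holomorphic quartic differential on $\s^2$ is zero, and invoke Bryant's characterization of immersions with $\Qr\equiv 0$ as conformal transformations of minimal surfaces. The only cosmetic difference is that you verify the hypothesis $\int_{\s^2}|\g\vn|^2_g\,d\vol_g<\infty$ via the Willmore energy and Gauss--Bonnet, whereas in the paper it is already built into the definitions (a branched immersion has $\int|A_\Phi|^2_{g_\Phi}\,d\vol_{g_\Phi}<\infty$ and $|\g\vn|^2_g=|A_\Phi|^2_g$ by \eqref{eq:gvn_H_Arond}, and \Cref{def:branch_point} requires $\vn\in W^{1,2}(\Sigma)$).
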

	
	From \cite[Theorem 10]{nguyen2012}, the Willmore energy of branched Willmore spheres is a multiple of $4\pi$. In particular, we have the following energy gap phenomenon for branched Willmore spheres. If $\Phi : \s^2\to \R^3$ is a branched Willmore sphere such that $W(\Phi) < 8\pi$, then $\Phi$ is a round sphere. We refer to \cite[Section 5]{nguyen2012} for various energy gap results.\\

	\textbf{Structure of the paper}\\
	In \cref{se:CGM}, we recall the definition of the geometric quantities we are working with. We recall the definition of a branch point and states the main estimates we will use. In \cref{se:proof}, we prove \cref{th:Qbounded}.\\
	
	\textbf{Acknowledgements}\\
	I would like to thank Paul Laurain for his constant support and advice. I also thank Tristan Rivière for important clarifications about the notion of branch point. Partial support through ANR BLADE-JC ANR-18-CE40-002 is acknowledged.
	
	\section{Branched Willmore immersion and conformal Gauss map}\label{se:CGM}
	
	In this section, we define first the geometric quantities we will work with. Second, we define the branched points and we recall asymptotic expansions of Willmore immersions around branch points. Third, we recall the definition of the conformal Gauss map.\\
	
	\textbf{Second fundamental form of an immersion}\\	
	Consider $\Phi :B_1\to \R^3$ a conformal immersion. Let $\lambda$ be the conformal factor : $g_\Phi := \Phi^*\xi = e^{2\lambda} h$, where $\xi$ is the flat metric on $\R^3$ and $h$ the flat metric on $B_1$. Let $\vn_\Phi$ be the Gauss map of $\Phi$, $A_\Phi$ its second fundamental form, $H_\Phi$ its mean curvature and $\Arond_\Phi$ the traceless part of the second fundamental form. They are given by the following formulas:
	\begin{align}
		(A_\Phi)_{ij} &:= -\scal{\dr_i \Phi}{\dr_j \vn_\Phi}_\xi, \nonumber\\
		H_\Phi &:= \frac{1}{2} \tr_g (A_\Phi) = \frac{e^{-2\lambda} }{2} \scal{\lap_h\Phi}{\vn_\Phi}_\xi, \nonumber\\
		\Arond_\Phi &= A_\Phi - H_\Phi g_\Phi, \nonumber\\
		\g \vn_\Phi &= -H_\Phi\g \Phi - e^{-2\lambda} \Arond_\Phi\g \Phi. \label{eq:gvn_H_Arond}
	\end{align}
	
	\textbf{Branched conformal immersions}	\\
	Let $\Sigma$ be a closed Riemann surface. In this work, a map $\Phi : \Sigma\to\R^3$ is called a branched immersion if there exists a finite set $\Sr\subset \Sigma$, called the set of branch points, such that $\Phi : \Sigma\setminus \Sr\to \R^3$ is a smooth immersion and 
	\begin{align*}
		\int_\Sigma |A_\Phi|^2_{g_\Phi} d\vol_{g_{\Phi}} <\infty.
	\end{align*}
	The above condition has to be understood has
	\begin{align*}
		\sup\left\{ \int_{U} |A_\Phi|^2_{g_\Phi} d\vol_{g_{\Phi}} : U\Subset \Sigma\setminus \Sr \right\}<\infty.
	\end{align*}
	See also Definition 2.2 in \cite{lamm2015} and the text below. Under this assumption, $g_\Phi$ is conformal to a smooth metric $h$ with constant curvature defined on the whole surface $\Sigma$. We sketch the proof of this result, it is detailed in \cite[p.332-334]{kuwert2004}. On each disk $D\subset \Sigma\setminus \Sr$ satisfying $\int_D |A_\Phi|^2_{g_\Phi} d\vol_{g_{\Phi}} <\frac{8\pi}{3}$, there exists conformal coordinates for $\Phi$, see for instance \cite[Theorem 5.1.1]{helein2002}. Thus, we can cover $\Sigma\setminus \Sr$ by charts $(U_j,\zeta_j)$ such that $\Phi\circ\zeta_j:B_1\to \R^3$ is conformal. Hence each $\zeta_k^{-1}\circ \zeta_j$ is conformal between two domains of $\R^2$. In particular, the family $(U_j,\zeta_j)$ generates a smooth conformal structure on $\Sigma\setminus \Sr$. Consider a given branch point $p\in\Sr$. Let $\Phi_p$ be the inversion of $\Phi$ with respect to the sphere centred at $p$ and radius $1$. Then $\Phi_p:\Sigma\to \R^3$ has a complete end at $p$ and a branch point at every point of $\Sr\setminus \{p\}$. Furthermore, $\Phi_p$ still satisfies 
	\begin{align*}
		\int_\Sigma |A_{\Phi_p}|^2_{g_{\Phi_p}} d\vol_{g_{\Phi_p}} <\infty.
	\end{align*}
	By Huber's theorem, see for instance \cite[Theorem 15]{huber1957} or \cite[Theorem 2.4.10]{saearp2015}, there exists a disk $D_p\subset \Sigma$ containing $p$ such that $\Phi_p(D_p)$ is conformally equivalent to $B_{\R^2}(0,1)\setminus \{0\}$ with the flat metric. Together with the charts $(U_j,\zeta_j)$ defined previously, we obtain a conformal structure on the whole surface $\Sigma$. Associated to this conformal structure, there is a smooth metric $h$ with constant curvature on $\Sigma$. Thus, $g_\Phi$ is conformal to $h$, with a conformal factor degenerating around the branch points.\\

	\textbf{Branched Willmore immersions}\\	
	Some precautions have to be taken in order to define branch Willmore surfaces. A first idea would be to define a branched conformal Willmore immersion as an immersion $\Phi : \Sigma\setminus \{p_1,\ldots,p_I\} \to \R^3$, where $p_1,\ldots,p_I$ are some given points in $\Sigma$, satisfying the following equation and energy bound:
	\begin{align}
		& \lap_g H + |\Arond|^2_g H = 0\ \ \ \text{ on }\Sigma\setminus \{p_1,\ldots,p_I\}, \label{eq:KS_branch}\\
		& \sup \left\{\int_U |A_\Phi|^2_{g_\Phi}\, d\vol_{g_\Phi} : U\Subset \Sigma\setminus \{p_1,\ldots,p_I\} \right\}<\infty. \label{eq:KS_energy_bound}
	\end{align}
	In order to shorten the notations, the condition \eqref{eq:KS_energy_bound} is sometimes written as $\|A_\Phi\|_{L^2(\Sigma)}<\infty$ or $\|A_\Phi\|_{L^2(\Sigma\setminus \{p_1,\ldots,p_I\})}<\infty$. Since no distributional meaning of $A_\Phi$ is needed across the branch point in order to use the analysis of Müller\textendash{\v S}ver{\' a}k \cite{muller1995}, the setting \eqref{eq:KS_branch}-\eqref{eq:KS_energy_bound} might seem a good setting at first sight. However, a careful reading of the works of Kuwert\textendash Schätzle \cite{kuwert2004,kuwert2007} and Lamm\textendash Nguyen \cite{lamm2015} shows that not only the condition \eqref{eq:KS_energy_bound} is used in the proofs of \cite{kuwert2004,kuwert2007,lamm2015}, but also the assumption that $A_\Phi$ has a distributional meaning across the singularities $p_1,\ldots,p_I$. Indeed, the proofs use that $A_\Phi \in L^2(\Sigma)$ in the sense of distributions, see for instance the proof of Lemma 4.1 in \cite{kuwert2004}, the end of the proof of Theorem I.1 at page 190 in \cite{kuwert2007} or the proof of Theorem 3.1 at p.176 in \cite{lamm2015}. We will call \textit{singular Willmore surfaces} the surfaces satisfying \eqref{eq:KS_branch}-\eqref{eq:KS_energy_bound}.\\
	
	 Even if Rivière emphasized the importance of the assumption $A_\Phi \in L^2(\Sigma)$ since 2008 in \cite{riviere2008}, through the notions of weak immersion and "true" branch point in his lecture notes \cite{riviere2016a} and all his works \cite{bernard2013,bernard2013a,bernard2014,riviere2013b,riviere2014,laurain2018a,riviere2020,palmurella2022}, the importance of this assumption is not written clearly in the existing literature. For instance, the work of Chen\textendash Li \cite{chen2014} uses the notion of singular Willmore surfaces, but quotes the work of Lamm\textendash Nguyen \cite{lamm2015} which uses the assumption $A_\Phi\in L^2(\Sigma)$, see below \Cref{rk:LN} and \Cref{rk:CL}. \\
	 
	 Indeed, the singularities arising in classical questions surrounding calculus of variations such as minimization among surfaces of given topological type or min-max problems enjoy this additional assumption, which leads to important consequences. In 2008, Rivière \cite{riviere2008} developed a weak formulation of the Willmore equation. He proved that \eqref{eq:KS_branch} is equivalent to the following equation:
	\begin{align*}
		\di\left(\g \vec{H} - 3\pi_{\vn}(\g \vec{H}) + *(\gp \vn\wedge \vec{H}) \right) = 0 \ \ \ \text{ on } \Sigma\setminus \{p_1,\ldots,p_I\},
	\end{align*}
	where $\vec{H} = H\vn$ and the operators $\wedge$ and $*$ are the wedge product and the Hodge star on vectors of $\R^3$. Rivière observed that the quantity $\left( \g \vec{H} - 3\pi_{\vn}(\g \vec{H}) + *(\gp \vn\wedge \vec{H}) \right)$ has a distributional meaning as long as $\vec{n}$ lies in $W^{1,2}(\Sigma;\s^2)$. \textit{A priori}, the formulation \eqref{eq:KS_branch} means that the quantity $\left( \g \vec{H} - 3\pi_{\vn}(\g \vec{H}) + *(\gp \vn\wedge \vec{H}) \right)$ defines a distribution on $\Sigma$ with support on the finite set $\{p_1,\ldots,p_I\}$. This implies that there exists some coefficients $a_{\alpha,i}\in \R$ such that for any smooth perturbation $\vec{w}\in C^\infty(\Sigma;\R^3)$, it holds:
	\begin{align}\label{eq:KS_singularities}
		\frac{d}{dt}W(\Phi + t\vec{w})_{|t=0} =  \delta W(\Phi)\cdot \vec{w} = \sum_{\alpha\in\N^2}\sum_{i=1}^I a_{\alpha,i} D^\alpha\vec{w}(p_i)\neq 0,
	\end{align}
	where the term $\delta W(\Phi)\cdot \vec{w}$ is the term we obtain in the weak formulation:
	\begin{align*}
		\delta W(\Phi)\cdot \vec{w} := \int_{\Sigma} \Big( \vec{H} \cdot (\lap \vec{w} - 3 \di(\pi_{\vn}(\g \vec{w}))) + *(\gp \vn \wedge \vec{H})\cdot \g \vec{w}\Big)\ d\vol_g.
	\end{align*}
	Hence, general singular Willmore surfaces do not satisfy the weak formulation of the Willmore equation across singularities. Therefore, one can wonder about surfaces satisfying the Willmore equation across the singularities:
	\begin{align}\label{eq:R_branch}
		\forall \vec{w}\in C^\infty(\Sigma;\R^3),\ \ \ \delta W(\Phi)\cdot \vec{w} = 0.
	\end{align}
	Branched conformal immersions satisfying \eqref{eq:R_branch} will be called \textit{branched conformal Willmore immersions}. Since this notion is compatible with weak limits of immersions, the singularities arising in classical variational problems (such as minimization among surfaces of given topological type, min-max problems, bubbling analysis, Willmore flow...) are given by branched Willmore immersions, see for instance \cite{bernard2014,riviere2016a,riviere2020,palmurella2022}. Such immersions satisfies additional regularity properties, see the work of Bernard\textendash Rivière \cite{bernard2013,bernard2013a} and the work of Kuwert\textendash Schätzle \cite{kuwert2004}, where for instance the assumption $d\vn\in L^2$ is needed to obtain estimates on the higher derivatives of the second fundamental form. Hence, the important distinction between \eqref{eq:KS_singularities} and \eqref{eq:R_branch} is the following. In \eqref{eq:R_branch}, the distributional derivative of $\vn$ is assumed to lie $L^2(\Sigma)$, providing the "initial estimate" across the singularities $\vn\in W^{1,2}(\Sigma)$. In \eqref{eq:KS_singularities}, some combinations of the distributional derivatives of $\vn$ is equal to a non-trivial linear combination of Dirac masses and their derivatives. Hence, even if \eqref{eq:KS_energy_bound} holds, the distributional derivative of $\vn$ does \textit{not} lie in $L^2(\Sigma)$ for general singular Willmore surfaces.\\
	
	We summarize this discussion in the following definition.
	\begin{defi}\label{def:branch_point}
		Let $(\Sigma,h)$ be a closed Riemann surface with a metric $h$ of constant curvature. A branched conformal Willmore surface $\Phi : \Sigma\to \R^3$ is a branched conformal immersion $\Phi : \Sigma\setminus \{p_1,\ldots,p_I\}\to \R^3$ for some points $p_1,\ldots,p_I\in \Sigma$ such that the Gauss map $\vn$ lies in $W^{1,2}(\Sigma)$ and such that \eqref{eq:R_branch} is satisfied.
	\end{defi}
	
	Given this definition, two remarks have to be considered in order to clarify the existing literature.
	
	\begin{remark}\label{rk:LN}
		In the work of Lamm\textendash Nguyen \cite{lamm2015}, the authors state a definition that might be understood as singular Willmore surfaces in Definition 2.4. However, a careful reading of the proof of Theorem 3.1 shows that they actually use the existence of a distributional derivative of $\vn$ across the singularities, hence they use the notion of branched Willmore surfaces. This can be seen in the statement of Theorem 2.3 in \cite{lamm2015}. The estimates on the higher derivatives of the second fundamental form is obtained only in the work of Bernard\textendash Rivière \cite{bernard2013,bernard2013a} and in Kuwert\textendash Schätzle \cite{kuwert2004}, where the assumption $A_{\Phi}\in L^2(\Sigma)$ is needed.
	\end{remark}
	
	\begin{remark}\label{rk:CL}
		Chen\textendash Li \cite{chen2014} proved that the energy of singular Willmore spheres is not quantized. Consequently, such surfaces cannot be conformal transformations of minimal surfaces. Actually, Theorem 2 in \cite{chen2014} shows that one can approximate the round sphere by singular Willmore surfaces with at least two singularities. Indeed, the authors introduce the following variational problem. Given an integer $m\geq 1$ and points $y_1\ldots, y_m\in\s^3$, we consider 
		\begin{align*}
			\beta_0(y_1,\ldots,y_m) := \inf \left\{ W(f) : f\in\Imm(\Sigma;\s^3),\ y_1\ldots,y_m\in f(\Sigma)
			\right\}.
		\end{align*}
		Chen\textendash Li claim in \cite[Theorem 2]{chen2014} to have obtained a branched Willmore immersion realizing $\beta_0(y_1,\ldots,y_m)$ if $\beta_0(y_1,\ldots,y_m)<8\pi$. However, if $f$ is a weak immersion realizing $\beta_0$, then $f$ must be singular in full generality because no variation is allowed at the points $y_1,\ldots,y_m$. Hence, the Euler\textendash Lagrange equation of $\beta_0$ must be of the form \eqref{eq:KS_singularities}. \\
		
		One can easily get an intuition as follows. Consider $\ve>0$ and the two points $y_1 = (\ve,0,0)$ and $y_2=(-\ve,0,0)$. Given $R>0$, let $C_R := \{0\}\times \{Rx : x\in\s^1\} \subset \R^3$ be the circle of radius $R$ centred at the origin and contained in the plane $\{0\}\times \R^2$. Using the ideas of \cite{chen2014}, one can obtain a solution to the following variational problem:
		\begin{align*}
			\beta := \inf \left\{ W(f) : f\in\Imm(B^2(0,R);\R^3),\ f(\dr B(0,R)) = C_R,\ y_1,y_2\in f(B(0,R))
			\right\}.
		\end{align*}
		Letting $R\to \infty$, we obtain a $W^{2,2}$-immersion $f_0 : \C \to \R^3$ such that $y_1,y_2\in f_0(\C)$, $f$ has density 1 at $y_1$ and $y_2$, $f_0$ is a smooth Willmore immersion on $\C\setminus f_0^{-1}(\{y_1,y_2\})$ and $f_0$ has one flat end. After inverting $f_0$, we obtain a weak immersion $f_1:\s^2 \to \R^3$ with exactly two singularities and satisfying the Willmore equation away from these singularities, the singularity of flat end is indeed removed by the inversion. If one could apply the work of Lamm\textendash Nguyen \cite{lamm2015}, as used in the proof of Theorem 2 in \cite{chen2014}, then $f_1$ would be a conformal transformation of a minimal surface. However, up to reducing $\ve$ in the definition of $y_1,y_2$, we obtain that $W(f_1)<4\pi+1$. Hence $f_1$ is the round sphere. Thus, $f_0$ must be a flat plane and we obtain a contradiction. Thus, \cite{lamm2015} can not be applied in this context. Hence the objects of \cite{chen2014} are not branched Willmore surfaces, but rather singular Willmore surfaces.
	\end{remark}
	
	We now explicit the asymptotic expansion of a branched Willmore immersion around a given branch point. Consider a conformal branched Willmore immersion $\Phi : B_1\setminus \{0\}\to \R^3$. By \cite[Lemma A.5]{riviere2014}, there exists $C_0>1$ and $\theta\in\N$, called the order or the multiplicity of the branch point, such that the following holds for any $x\in B_1\setminus \{0\}$:
	\begin{align}\label{asy:Phi_gPhi}
		C_0^{-1}|x|^{1+\theta} \leq |\Phi(x)| \leq C_0|x|^{1+\theta}, & & C_0^{-1}|x|^\theta \leq |\g \Phi(x)| \leq C_0|x|^\theta.
	\end{align}
	By \cite[Theorem 1.8]{bernard2013a}, the mean curvature vector $\vec{H}_\Phi$ of $\Phi$ satisfies the following asymptotic expansion. There exists $\alpha\in\inter{0}{\theta}$ and two vectors $\vec{E},\vec{\gamma}\in \C^3$ such that, in complex notations, it holds:
	\begin{align}\label{asy:H}
		\vec{H}_\Phi(z) &\ust{z\to 0}{=} \Re\left(\vec{E}z^{-\alpha} + \vec{\gamma} \log(|z|) \right)(1+o(1)).
	\end{align}
	
	\textbf{Conformal Gauss map for immersions in $\R^3$}\\
	For an introduction to conformal Gauss maps, see for instance \cite{hertrich-jeromin2003,marque2021}. By definition, the conformal Gauss map $Y$ of an immersion $\Phi:\Sigma\to\R^3$ is given by
	\begin{align}\label{def:CGM}
		Y := H_\Phi \begin{pmatrix}
			\vspace*{0.2em} \Phi\\ \vspace*{0.2em} \frac{|\Phi|^2-1}{2} \\ \vspace*{0.2em} \frac{|\Phi|^2+1}{2}
		\end{pmatrix} + \begin{pmatrix}
			\vn\\
			\scal{\vn}{\Phi}_\xi\\
			\scal{\vn}{\Phi}_\xi
		\end{pmatrix}.
	\end{align}
	In particular, it holds $H_\Phi = Y_5-Y_4$. A direct computation yields 
	\begin{align}\label{eq:gY}
		\g Y = (\g H_\Phi)\begin{pmatrix}
			\vspace*{0.2em} \Phi\\ \vspace*{0.2em} \frac{|\Phi|^2-1}{2} \\ \vspace*{0.2em} \frac{|\Phi|^2+1}{2}
		\end{pmatrix} - \Arond_\Phi \begin{pmatrix}
			\g \Phi\\ \scal{\Phi}{\g \Phi}_\xi\\ \scal{\Phi}{\g \Phi}_\xi
		\end{pmatrix}.
	\end{align}
	We have denoted $(\Arond_\Phi \g \Phi)_i = (\Arond_\Phi)_{ij}\g^j \Phi$, where the index is raised with the metric $g_\Phi$. Given any immersion $\Phi$, the conformal Gauss map is conformal to $\Phi$. Indeed it holds $Y^*\eta = \frac{|\Arond_\Phi|^2_{g_\Phi}}{2} g_\Phi$. Furthermore, $\Phi$ is Willmore if and $Y$ is harmonic.\\
	
	\textbf{Conformal Gauss map for immersions in $\s^3$}\\
	We will also use the representations in $\s^3$ in order to use the estimates from the proof of \cite[Theorem 4.1]{martino2023}. Given an immersion $\Psi : \Sigma\to\s^3$ with mean curvature $H_\Psi$ and Gauss map $N_\Psi$, the conformal Gauss map $Y$ of $\Psi$ is given by 
	\begin{align*}
		Y &= H_\Psi\begin{pmatrix}
			\Psi\\ 1 
		\end{pmatrix} + \begin{pmatrix}
			N_\Psi\\ 0
		\end{pmatrix}.
	\end{align*}
	The derivatives of $Y$ are given by
	\begin{align}\label{eq:gY_s3}
		\g Y &= (\g H_\Psi) \begin{pmatrix}
			\Psi\\ 1
		\end{pmatrix} - \Arond_\Psi\begin{pmatrix}
			\g \Psi\\ 0
		\end{pmatrix},
	\end{align}
	where $\Arond_\Psi$ is the second fundamental form of $\Psi$ and $(\Arond_\Psi\g \Psi)_i = (\Arond_\Psi)_{ij}\g^j \Psi$, with the index raised by the metric $\Psi^*\xi$. If $\pi : \s^3\setminus\{\text{north pole} \} \to \R^3$ is the stereographic projection, and $\Phi = \pi\circ\Psi$, then the two definitions of the conformal Gauss map of $\Phi$ and $\Psi$ coincide, see \cite[Equation (72)]{marque2021}:
	\begin{align}\label{eq:equality_CGM}
		H_\Phi \begin{pmatrix}
			\vspace*{0.2em} \Phi\\ \vspace*{0.2em} \frac{|\Phi|^2-1}{2} \\ \vspace*{0.2em} \frac{|\Phi|^2+1}{2}
		\end{pmatrix} + \begin{pmatrix}
			\vn\\ \scal{\vn }{\Phi}\\ \scal{\vn}{\Phi}
		\end{pmatrix} = H_\Psi \begin{pmatrix}
			\Psi\\ 1
		\end{pmatrix} + \begin{pmatrix}
			N_\Psi\\ 0
		\end{pmatrix}.
	\end{align}
	
	\section{Proof of \cref{th:Qbounded}}\label{se:proof}
	Until the end of the section, we fix $\Sigma$ a Riemann surface and $\Phi : \Sigma\to \R^3$ a  branched Willmore immersion. Let $\xi$ be the flat metric on $\R^3$. Consider a smooth metric $h$ on $\Sigma$ and let $\lambda :\Sigma\to \R$ be the conformal factor of $\Phi$, i.e. $\Phi^*\xi = e^{2\lambda}h$. Let $\Sr = \{p_1,\cdots, p_q\}$ the set of branched points of $\Phi$. In \cref{subsec:check_hyp}, we define a blow up around a given branch point and prove that our setting falls into a minor adaptation of the proof \cite[Theorem 4.1]{martino2023}. In \cref{subsec:Proof}, we conclude the proof of \cref{th:Qbounded}.\\
	
	We now focus on a unique branched point, that we assume to be at $0$. We denote $\theta\in\N$ the order of the branch point.
	
	\subsection{Blow up on a fixed branch point}\label{subsec:check_hyp}
	
	In this section, we proceed to a blow up around the origin. We will use complex notations. We prove that our problem falls into a minor adaptation of the setting of \cite[Theorem 4.1]{martino2023}.\\
	
	There are two main hypothesis in this theorem. First, the immersion has to be defined and smooth on a closed surface. Second, $\Phi$ has to satisfy some estimates on a given cylinder. The first hypothesis is a technical one needed only to prove the $L^\infty$-bound (23) of Lemma 5.4, see Remark 5.1 in \cite{martino2023}. We will show that the proof still holds when $\Phi$ has branch points, see \cref{lm:Good_ball}. To obtain the second hypothesis, we will proceed to a blow up around a given branch point, see \cref{lm:hyp_cvg}. \\
	Let $r_k >0$ be a sequence of radii converging toward $0$, and for any $z\in \C \setminus B_{r_k}$, let 
	\begin{align}\label{def:Phik}
		\Phi_k(z) := \Phi\left( \frac{r_k}{z} \right).
	\end{align}
	First, on the annuli $\C\setminus B_1$, we have the following estimates:
	\begin{lemma}\label{lm:hyp_cvg}
		Let $\ve_0$ be given by \cite[Theorem 4.1]{martino2023}. For $k$ large enough, it holds
		\begin{align*}
			\sup_{k\in\N}\int_{\C\setminus B_1} |A_{\Phi_k}|^2_{g_{\Phi_k}} d\vol_{g_{\Phi_k}} <\ve_0,
		\end{align*}
		and
		\begin{align*}
			\sup_{\rho\in \left[ 1,\infty \right)} \int_{B_{2\rho}\setminus B_\rho}  |A_{\Phi_k}|^2_{g_{\Phi_k}} d\vol_{g_{\Phi_k}} \xrightarrow[k\to\infty]{}{0}.
		\end{align*}
	\end{lemma}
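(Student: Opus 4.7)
The plan is to reduce both estimates to the absolute continuity of the finite measure $|A_\Phi|^2_{g_\Phi}\, d\vol_{g_\Phi}$ on $\Sigma$, exploiting the fact that $\Phi_k$ is merely a conformal reparametrization of $\Phi$ restricted to a shrinking punctured neighbourhood of the branch point.

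First I would observe that $\iota_k : z\mapsto r_k/z$ is a diffeomorphism from $\C\setminus \overline{B_1}$ onto $B_{r_k}\setminus\{0\}$ and that $\Phi_k = \Phi\circ \iota_k$ by the definition \eqref{def:Phik}. Since the second fundamental form and the induced volume form of an immersion are the tensorial pullbacks under any diffeomorphism $\iota$ of $A_\Phi$ and $d\vol_{g_\Phi}$ respectively, the change of variable formula gives, for every measurable $U\subset \C\setminus \overline{B_1}$,
\begin{align*}
    \int_U |A_{\Phi_k}|^2_{g_{\Phi_k}}\, d\vol_{g_{\Phi_k}} \;=\; \int_{\iota_k(U)} |A_\Phi|^2_{g_\Phi}\, d\vol_{g_\Phi}.
\end{align*}
This reparametrization-invariance is the only non-trivial ingredient; everything else is soft.

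Next, the hypothesis $\int_\Sigma |\g\vn|^2_g\, d\vol_g<\infty$ of \cref{th:Qbounded}, combined with the Weingarten identity $|\g\vn|^2_g = |A_\Phi|^2_g$ (a direct consequence of \eqref{eq:gvn_H_Arond}), yields $\int_\Sigma |A_\Phi|^2_{g_\Phi}\, d\vol_{g_\Phi} < \infty$. By absolute continuity of the integral, $\int_{B_{r_k}} |A_\Phi|^2_{g_\Phi}\, d\vol_{g_\Phi} \to 0$ as $r_k\to 0$. Taking $U = \C\setminus \overline{B_1}$, so that $\iota_k(U) = B_{r_k}\setminus\{0\}$, gives the first estimate provided $k$ is large enough. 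For the second, the annulus $B_{2\rho}\setminus B_\rho$ with $\rho\geq 1$ is sent by $\iota_k$ onto $B_{r_k/\rho}\setminus B_{r_k/(2\rho)}\subset B_{r_k}$, so the supremum over $\rho\in [1,\infty)$ is dominated by the same quantity $\int_{B_{r_k}} |A_\Phi|^2_{g_\Phi}\, d\vol_{g_\Phi}$, which also tends to $0$.

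The main (essentially the only) point is the reparametrization-invariance identity above, which is classical. There is no real analytical difficulty; one just has to make sure that $A_\Phi\in L^2(\Sigma)$ is available — which is built into \cref{def:branch_point} and equivalent via Weingarten to the stated hypothesis $\g\vn\in L^2$ — and that $\iota_k$ sends the whole domain $\C\setminus \overline{B_1}$ inside an arbitrarily small neighbourhood of the branch point.
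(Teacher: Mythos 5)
Your proposal is correct and follows essentially the same route as the paper: both reduce the two estimates to the reparametrization invariance of $|A_\Phi|^2_{g_\Phi}\,d\vol_{g_\Phi}$ under $z\mapsto r_k/z$ and then invoke absolute continuity of the finite total curvature integral on shrinking neighbourhoods of the branch point. The only cosmetic difference is that the paper rewrites the supremum over $\rho\in[1,\infty)$ as a supremum over dyadic annuli $B_{2\rho}\setminus B_\rho$ with $\rho\in(0,r_k/2]$, whereas you dominate it directly by $\int_{B_{r_k}}|A_\Phi|^2_{g_\Phi}\,d\vol_{g_\Phi}$; both are equally valid.
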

	\begin{proof}
		Using \eqref{def:Phik} and the fact that the Willmore energy is a geometric quantity, it holds
		\begin{align*}
			\int_{\C\setminus B_1} |A_{\Phi_k}|^2_{g_{\Phi_k}} d\vol_{g_{\Phi_k}} =  \int_{B_{r_k}}|A_{\Phi}|^2_{g_{\Phi}} d\vol_{g_{\Phi}}.
		\end{align*}
		Since $\int_{B_1} |A_{\Phi}|^2_{g_{\Phi}} d\vol_{g_{\Phi}}<\infty$ and we integrate on a ball of radius converging to $0$, we conclude that for $k$ large enough, it holds
		\begin{align*}
			\int_{\C\setminus B_1} |A_{\Phi_k}|^2_{g_{\Phi_k}} d\vol_{g_{\Phi_k}} < \ve_0.
		\end{align*}
		Using the same argument on every dyadic annuli, we obtain:
		\begin{align*}
			\sup_{\rho\in \left[ 1,\infty \right)} \int_{B_{2\rho}\setminus B_\rho}  |A_{\Phi_k}|^2_{g_{\Phi_k}} d\vol_{g_{\Phi_k}} &= \sup_{\rho\in \left( 0,\frac{r_k}{2} \right]} \int_{B_{2\rho}\setminus B_\rho}  |A_{ \Phi}|^2_{g_{\Phi}} d\vol_{g_{\Phi}} \xrightarrow[k\to\infty]{}{0}.
		\end{align*}
	\end{proof}
	
	The sequence $(\Phi_k)_{k\in\N}$ is defined on the closed surface $\Sigma$. Furthermore, since $\Phi_k$ is only a change of parametrization of $\Phi$, we obtain a uniform bound on the $L^2$-norm of its second fundamental form: it holds
	\begin{align*}
		\int_\Sigma |A_{\Phi_k}|^2_{g_{\Phi_k}} d\vol_{g_{\Phi_k}} = \int_\Sigma |A_\Phi|^2_{g_\Phi} d\vol_{g_\Phi}.
	\end{align*}
	As explained previously, in order to apply all the estimates on the conformal Gauss map coming from \cite[Section 5]{martino2023}, we need to check the bound (23) of Lemma 5.5 in \cite{martino2023}, we refer again to Remark 5.1 in \cite{martino2023}. The proof of this bound is a direct application of \cite[Lemma 5.13]{riviere2016a}. We prove in \cref{lm:Good_ball} that this lemma still holds for branched immersions. \cref{lm:Good_ball} follows from direct consequences of the existence of density and consequences of the monotonicity formula given by \cite[Lemma 5.6]{riviere2016a}. The existence of a density follows from  \cite[Theorem 3.1]{kuwert2012b}. We now check that the monotonicity formula for smooth immersions still holds for branched immersions. 
	
	\begin{lemma}[Monotonicity formula for branched immersions]
		Let $\Psi : \Sigma \to \R^3$ be a branched immersion such that 
		\begin{align*}
			\int_\Sigma |A_\Psi|^2_{g_\Psi} d\vol_{g_\Psi} <\infty.
		\end{align*}
		Let $M = \Phi(\Sigma)$. Then, for any $x_0\in\R^3$ and $0<t<T$, it holds
		\begin{align*}
			& T^{-2} \mathrm{Area}\left[ M \cap B(x_0,T)  \right] - t^{-2} \mathrm{Area}\left[ M \cap B(x_0,t)  \right]\\
			\geq &-\frac{1}{4} \int_{M \cap B(x_0,T) \setminus B(x_0,t)} H^2_\Psi d\vol_{g_\Psi}  - \frac{1}{T^2} \int_{M\cap B(x_0,T)} \scal{x-x_0}{\vec{H}_\Psi} d\vol_{g_\Psi} + \frac{1}{t^2}\int_{M\cap B(x_0,t)} \scal{x-x_0}{\vec{H}_\Psi} d\vol_{g_\Psi}.
		\end{align*}
	\end{lemma}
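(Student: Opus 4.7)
The plan is to apply the classical smooth monotonicity formula on the complement of shrinking neighbourhoods of the branch points, and let these neighbourhoods collapse. Let $\Sr = \{p_1, \ldots, p_q\}$ denote the branch set of $\Psi$ with respective orders $\theta_1, \ldots, \theta_q$, and fix $x_0 \in \R^3$ and $0 < t < T$. For $\delta > 0$ small, set $\Sigma_\delta := \Sigma \setminus \bigcup_{i=1}^q \overline{B_\delta(p_i)}$ and $M_\delta := \Psi(\Sigma_\delta)$. Then $\Psi|_{\Sigma_\delta}$ is a smooth immersion with boundary $\Psi(\partial \Sigma_\delta) = \bigcup_i \Psi(\partial B_\delta(p_i))$, so the monotonicity formula for smooth immersions with boundary (as in \cite[Appendix A]{kuwert2004} or the derivation underlying \cite[Lemma 5.6]{riviere2016a}) applies on $M_\delta$.

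First, starting from the tangential Laplacian identity $\Delta_M |x-x_0|^2 = 4 + 2\scal{x-x_0}{\vec{H}_\Psi}$ and integrating $\rho^{-2}$ times this identity over $M_\delta \cap B(x_0,\rho)$ for $\rho \in \{t,T\}$, I would use the divergence theorem together with the usual completion-of-squares manipulation (which produces the $-\tfrac{1}{4}H_\Psi^2$ term and discards a nonnegative radial-flux contribution) to recover exactly the bulk terms of the stated inequality, plus a boundary correction of the form
\begin{align*}
	R_\delta(t,T) \;=\; \sum_{i=1}^q \sum_{\rho \in \{t,T\}} \rho^{-2} \int_{\Psi(\partial B_\delta(p_i)) \cap B(x_0,\rho)} X_\rho \cdot \nu \, d\sigma,
\end{align*}
where $X_\rho$ is a vector field built from $x - x_0$ and cutoffs in $|x-x_0|/\rho$, uniformly bounded by a constant $C(t,T)$.

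Second, the branch-point asymptotics \eqref{asy:Phi_gPhi} allow me to control $R_\delta$ as $\delta \to 0$. Since $|\g \Psi(x)| \leq C|x|^{\theta_i}$ near $p_i$, the length of $\Psi(\partial B_\delta(p_i))$ is at most $C \delta^{1 + \theta_i}$, so $|R_\delta(t,T)| \lesssim \sum_i \delta^{1+\theta_i} \to 0$. Meanwhile, the excised area satisfies $\mathrm{Area}(\Psi(B_\delta(p_i))) \lesssim \delta^{2+2\theta_i} \to 0$, and since $|\vec{H}_\Psi| \leq |A_\Psi| \in L^2(\Sigma)$, dominated convergence yields the convergence of $\int_{M_\delta \cap B_\rho} H_\Psi^2 \, d\vol_{g_\Psi}$ and of $\int_{M_\delta \cap B_\rho} \scal{x-x_0}{\vec{H}_\Psi} \, d\vol_{g_\Psi}$ to the corresponding integrals on $M$. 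Passing to the limit $\delta \to 0$ in the inequality for $M_\delta$ yields the stated monotonicity formula.

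The main obstacle is the explicit verification that the boundary contribution $R_\delta$ is indeed controlled by the \emph{length} of $\Psi(\partial B_\delta(p_i))$ with a constant depending only on $t, T$: once this is established, \eqref{asy:Phi_gPhi} furnishes the decay $\delta^{1+\theta_i}$ for free, and the bulk convergence is then a routine consequence of the hypothesis $|A_\Psi|_{g_\Psi}\in L^2(\Sigma)$.
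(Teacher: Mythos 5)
Your proposal is correct and follows essentially the same route as the paper: both excise small balls around the branch points, invoke the monotonicity formula with boundary for the resulting smooth immersion (the paper cites \cite[Lemma A.3]{riviere2013b} rather than rederiving it from the tangential divergence identity), and then use the asymptotics \eqref{asy:Phi_gPhi} to show the boundary contribution vanishes as $\delta\to 0$, since the integrand is bounded and the curve $\Psi(\partial B_\delta(p_i))$ shrinks. The only cosmetic difference is that you bound the boundary term by the length of the image circle while the paper bounds it by a bounded integrand against the shrinking measure of $\partial B_h(p_i,\delta)$; these are the same estimate.
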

	
	\begin{proof}
		We start from the monotonicity formula for immersions with boundary coming from \cite[Lemma A.3]{riviere2013b}. Consider $\{p_1,...,p_q\}$ the branch points of $\Psi$. Let $h$ be a metric of constant curvature conformal to $g_\Psi$ and $\lambda$ be the conformal factor: $g_\Psi = e^{2\lambda}h$. Let $\delta>0$ and consider the restriction $\Psi : \Sigma \setminus \bigcup_{i=1}^q B_h(p_i,\delta) \to \R^3$. Let $M_\delta := \Psi\left(\Sigma \setminus \bigcup_{i=1}^q B_h(p_i,\delta) \right)$. Then for any ball $B(x_0,T)\subset \R^3$ and $0<t<T$, it holds
		\begin{align*}
			& T^{-2} \mathrm{Area}\left[ M_\delta \cap B(x_0,T)  \right] - t^{-2} \mathrm{Area}\left[ M_\delta \cap B(x_0,t)  \right]\\
			\geq &-\frac{1}{4} \int_{M_\delta \cap B(x_0,T) \setminus B(x_0,t)} H^2_\Psi d\vol_{g_\Psi}  \\
			&- \frac{1}{T^2} \int_{M_\delta\cap B(x_0,T)} \scal{x-x_0}{\vec{H}_\Psi} d\vol_{g_\Psi} + \frac{1}{t^2}\int_{M_\delta\cap B(x_0,t)} \scal{x-x_0}{\vec{H}_\Psi} d\vol_{g_\Psi} \\
			&+ \frac{1}{2} \int_{\dr M_\delta \cap B(x_0,T)} \left( \frac{1}{T^2} - \frac{1}{\max(|x-x_0|,t)^2} \right) \scal{x-x_0}{\vn_\Psi} d\ell_{\dr M_\delta}.
		\end{align*}
		On the last line, the measure $d\ell_{\dr M_\delta}$ is the restriction of the volume form $d\vol_{g_{\Psi}}$ to the boundary $\dr M_\delta$. To recover the formula from \cite[Lemma 5.6]{riviere2016a}, we consider the limit $\delta\to 0$ and prove that the last term (the only boundary term) converges toward $0$. Coming back to the parametrization $\Psi$, it holds
		\begin{align*}
			& \left| \int_{\dr M_\delta \cap B(x_0,T)} \left( \frac{1}{T^2} - \frac{1}{\max(|x-x_0|,t)^2} \right) \scal{x-x_0}{\vn_\Psi} d\ell_{\dr M_\delta} \right|  \\
			\leq & \left( \frac{1}{T^2} + \frac{1}{t^2} \right) \int_{\bigcup_{i=1}^q \dr B(p_i, \delta)} |\scal{\Psi - x_0}{\vn_\Psi}| e^{\lambda} d\vol_h.
		\end{align*}
		Using the asymptotic expansion \eqref{asy:Phi_gPhi}, we obtain that $\Psi$ and $e^\lambda$ are bounded around a branch point. Since the measure of the circle $\dr B(p_i,\delta)$ converges to $0$, we obtain
		\begin{align*}
			\int_{\bigcup_{i=1}^q \dr B(p_i, \delta)} |\scal{\Psi - x_0}{\vn_\Psi}| e^{\lambda} d\vol_h \xrightarrow[\delta\to 0]{}{0}.
		\end{align*}
		Hence we recover the formula from \cite[Lemma 5.6]{riviere2016a} for branched immersions.
	\end{proof}
	
	By following verbatim the proof of \cite[Lemma 5.13]{riviere2016a}, we obtain:
	
	\begin{lemma}\label{lm:Good_ball}
		Let $\Psi_k : \Sigma \to \R^3$ be a sequence of branched immersions satisfying 
		\begin{align*}
			\sup_{k\in\N} \int_\Sigma |A_{\Psi_k}|^2_{g_{\Psi_k}} d\vol_{g_{\Psi_k}} <\infty.
		\end{align*}
		Then, there exists a ball $B\subset B_{\R^3}(0,1)$ such that up to a subsequence, the following holds for any $k\in\N$: 
		$$
		\Psi_k(\Sigma)\cap B = \emptyset.
		$$
	\end{lemma}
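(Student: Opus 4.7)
The plan is to follow verbatim the proof of Lemma 5.13 in \cite{riviere2016a}, as the author indicates. Its two pillars are (i) the monotonicity formula for branched immersions, just established in the previous lemma, and (ii) the existence of a well-defined $2$-density at every point of $\Psi_k(\Sigma)$, supplied by \cite[Theorem 3.1]{kuwert2012b}. The density is everywhere at least $1$: equal to $1$ at smooth points, and equal to the multiplicity $\theta + 1$ at the image of a branch point, the latter being visible from the asymptotic expansion \eqref{asy:Phi_gPhi}.

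With these two ingredients, I would derive two quantitative estimates. Taking $t \to 0$ in the branched monotonicity formula at a point $y \in \Psi_k(\Sigma) \cap B_{\R^3}(0,1)$ yields a uniform density lower bound $\mathrm{Area}(\Psi_k(\Sigma) \cap B_{\R^3}(y,r)) \ge c_0 r^2$ for all $r \le 1$, with $c_0>0$ independent of $k$. Applying the same formula at a larger scale together with the uniform control $\|\vec{H}_{\Psi_k}\|_{L^2}^2 \le \|A_{\Psi_k}\|_{L^2}^2 \le C$ produces a uniform upper bound $\mathrm{Area}(\Psi_k(\Sigma) \cap B_{\R^3}(0,2)) \le C'$ independent of $k$.

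Next, I would argue by contradiction: suppose that along no subsequence does any ball of radius $r$ inside $B_{\R^3}(0,1)$ avoid $\Psi_k(\Sigma)$. Then $\Psi_k(\Sigma)$ is $r$-dense in $B_{\R^3}(0,1)$, so one may pack $\sim r^{-3}$ disjoint balls of radius $2r$ into $B_{\R^3}(0,2)$, each meeting $\Psi_k(\Sigma)$. The density lower bound forces $\mathrm{Area}(\Psi_k(\Sigma) \cap B_{\R^3}(0,2)) \gtrsim r^{-1}$, contradicting the uniform upper bound once $r$ is small enough. Hence there exists a universal $r_0>0$ and, along a subsequence, balls $B_k \subset B_{\R^3}(0,1)$ of radius $r_0$ disjoint from $\Psi_k(\Sigma)$. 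A further extraction brings the centers of $B_k$ to some $x_\infty$, and then $B := B_{\R^3}(x_\infty, r_0/2)$ is eventually contained in $B_k$ and provides the required common ball.

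The main obstacle, compared to the smooth case of \cite[Lemma 5.13]{riviere2016a}, lies in validating both the monotonicity formula and the density lower bound in the presence of branch points. The former was handled in the previous lemma; the latter follows immediately from the asymptotic expansion \eqref{asy:Phi_gPhi} of Bernard–Rivière, which ensures that the multiplicity $\theta + 1$ at any image branch point is at least $1$. Once these two inputs are in place, the remaining covering part of the proof is identical to the smooth case.
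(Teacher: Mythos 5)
Your proposal is correct and follows essentially the same route as the paper, which itself simply invokes the monotonicity formula for branched immersions (the preceding lemma), the existence of the density from \cite[Theorem 3.1]{kuwert2012b}, and then follows verbatim the covering argument of \cite[Lemma 5.13]{riviere2016a} — precisely the lower density bound, upper area bound, and ball-packing contradiction you spell out. The only caveat, shared with the paper's own statement, is that the upper area bound requires the implicit normalization present in the application (the $\Phi_k$ are reparametrizations of a fixed immersion), but this does not affect the comparison.
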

	
	Let $Y_k :\Sigma \to \s^{3,1}$ be the conformal Gauss map of $\Phi_k$. On the annuli $\C\setminus B_{r_k}$, it holds 
	\begin{align}\label{eq:CGM_Phik}
		Y_k(z) = Y\left(\frac{r_k}{z} \right).
	\end{align}
	
	\subsection{End of the proof of \cref{th:Qbounded}}\label{subsec:Proof}
	In this section, we conclude the proof of \cref{th:Qbounded}. First we show that around branch points, the conformal Gauss map always blow up, see \cref{lm:existence_oscillations_bp}. Therefore up to a conformal transformation of $\Phi_k$, we obtain the convergence of the conformal Gauss maps to a geodesic on a well chosen annuli, thanks to \cite[Theorem 4.1]{martino2023}. By \cref{lm:estimate_gvn}, we control the $L^2$-norm of the second fundamental form after the conformal transformation, so we conclude.\\
	
	We start by showing that the conformal Gauss map always blows up at branch points.
	\begin{lemma}\label{lm:existence_oscillations_bp}
		For any $M\in SO(4,1)$ and $s_1\in(0,1)$, it holds
		\begin{align*}
			\osc_{B_{s_1}\setminus B_s} (MY) := \sup_{x,y\in B_{s_1}\setminus B_s} |MY(x)-MY(y)|_\xi \xrightarrow[s\to 0]{}{ +\infty}.
		\end{align*}
	\end{lemma}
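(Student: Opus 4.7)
I argue by contradiction. Suppose there exist $M \in SO(4,1)$ and $s_1 \in (0,1)$ such that $\osc_{B_{s_1}\setminus B_s}(MY)$ stays bounded as $s\downarrow 0$. Since $MY$ is continuous on the connected punctured ball $B_{s_1}\setminus\{0\}$, bounded oscillation forces $MY$ to be uniformly bounded in Euclidean norm on this set; because $M$ is an invertible linear endomorphism of $\R^5$, so is $Y = M^{-1}(MY)$. From $Y(\Sigma) \subset \s^{3,1}$, the identity $|Y|_\xi^2 = 1 + 2 Y_5^2$ then forces $Y_5$ to be bounded.

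Using \eqref{def:CGM} together with the asymptotic $|\Phi(z)| = O(|z|^{1+\theta})$ from \eqref{asy:Phi_gPhi}, I compute
\begin{align*}
	Y_5(z) \;=\; H_\Phi(z)\cdot\frac{|\Phi(z)|^2+1}{2} + \scal{\vn(z)}{\Phi(z)}_\xi \;=\; \tfrac{1}{2} H_\Phi(z) + O(|z|^{1+\theta}).
\end{align*}
Boundedness of $Y_5$ is therefore equivalent to boundedness of the scalar mean curvature $H_\Phi$ as $z\to 0$. Inserted into the expansion \eqref{asy:H}, $\vec H_\Phi(z) = \Re\bigl(\vec E z^{-\alpha} + \vec\gamma\log|z|\bigr)(1+o(1))$ with $\alpha\in\inter{0}{\theta}$, this boundedness forces $\vec\gamma=0$ and $\vec E = 0$ whenever $\alpha\geq 1$; the leading singular terms of $\vec H_\Phi$ thus all vanish, and $\vec H_\Phi$ extends to a finite limit at the branch point.

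To derive a contradiction in this degenerate regime, I invoke the blow-up framework of \Cref{subsec:check_hyp}. Uniform boundedness of $Y$ near the branch point translates to uniform boundedness of the rescaled conformal Gauss maps $Y_k(z) = Y(r_k/z)$ on every compact annulus of $\C\setminus\{0\}$. Combined with \Cref{lm:hyp_cvg} and \Cref{lm:Good_ball}, the hypotheses of \cite[Theorem 4.1]{martino2023} are satisfied, so up to a sequence $M_k \in SO(4,1)$ the maps $M_kY_k$ converge in $C^2_{\mathrm{loc}}$ to a null geodesic $\gamma$ of $\s^{3,1}$. Any non-constant null geodesic is an affine line in $\R^5$ with nonzero null direction, hence unbounded in Euclidean norm. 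The main obstacle is this last step: one must unpack the construction of \cite{martino2023} to verify that $\|M_k\|_{\mathrm{op}}$ stays controlled---so that boundedness of $Y_k$ transfers to $M_kY_k$---and that the limit $\gamma$ cannot collapse to a constant, which uses the genuine branch-point hypothesis $\theta\geq 1$ together with the quartic estimates from the $\ve$-regularity discussed in the introduction.
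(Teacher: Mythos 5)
Your opening reduction is correct, and it is essentially the contrapositive of what the paper actually does. Bounded oscillation on the connected punctured disc forces $MY$, hence $Y=M^{-1}(MY)$, to be bounded; since $|Y|^2_\xi=1+2Y_5^2$ on $\s^{3,1}$ and $Y_5=\tfrac{1}{2}H_\Phi(1+|\Phi|^2)+\scal{\vn}{\Phi}_\xi$ with $\Phi\to 0$ by \eqref{asy:Phi_gPhi}, this is equivalent to $H_\Phi$ remaining bounded, i.e.\ to the vanishing of the singular part of \eqref{asy:H}. The paper runs this computation in the direct sense: it substitutes \eqref{asy:Phi_gPhi}--\eqref{asy:H} into \eqref{def:CGM} to get $Y(z)=\Re\big((az^{-\alpha}+b\log|z|)\vec{E}\big)(1+o(1))$ and reads off that the oscillation of $MY$ on $B_{s_1}\setminus\{0\}$ is infinite.

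The genuine gap is the last step, which you yourself flag as ``the main obstacle'': in the regime where $H_\Phi$ stays bounded you never reach a contradiction, and the proposed appeal to \cite[Theorem 4.1]{martino2023} cannot supply one. First, the renormalizing matrices $M_k\in SO(4,1)$ produced there are Lorentz transformations whose operator norms are in general unbounded, so boundedness of $Y_k$ gives no control on $M_kY_k$; moreover the convergence to the null line is local on expanding cylinders, so on any fixed compact piece the limit is bounded and there is nothing to contradict. Worse, in this paper the normalization \eqref{hyp:construction_sk} needed to invoke that theorem is itself obtained from the present lemma, so your argument is circular. Second, and decisively, the degenerate case you are trying to exclude is actually realized: for a branched minimal immersion ($H_\Phi\equiv 0$, e.g.\ built from Weierstrass data with a zero of the appropriate order at the branch point), the conformal Gauss map reduces to $(\vn,\scal{\vn}{\Phi}_\xi,\scal{\vn}{\Phi}_\xi)$, which extends continuously across the branch point; no contradiction can be derived from the hypotheses you have used. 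In other words, the statement needs the singular prefactor $az^{-\alpha}+b\log|z|$ to be genuinely unbounded, a nondegeneracy that the paper's one-line proof asserts implicitly and that your contradiction argument correctly isolates but does not establish. To finish along either route one must justify why that singular part is nontrivial at the branch point under consideration, or else handle the bounded case separately where the lemma is used.
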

	\begin{proof}
		Thanks to the asymptotic expansions \eqref{asy:Phi_gPhi}-\eqref{asy:H} and the definition of a conformal Gauss map \eqref{def:CGM}, we have the following behaviour for $Y$ around $0$. There exists $\alpha\in\inter{0}{\theta}$, $\vec{E}\in\C^5$ and constants $a,b\in\C$ such that, in complex notations, it holds
		\begin{align*}
			Y(z) \ust{z\to 0}{=} \Re\left( \left( a z^{-\alpha} + b \log(|z|)\right) \vec{E} \right)\left( 1 + o(1) \right).
		\end{align*}
		In particular, for any matrix $M\in SO(4,1)$, it holds
		\begin{align*}
			MY(z)\ust{z\to 0}{=} \Re\left( \left( a z^{-\alpha} + b \log(|z|)\right) M\vec{E}\right)\left( 1 + o(1) \right).
		\end{align*}
		Therefore, given any $s_1\in(0,1)$, it holds
		\begin{align*}
			\osc_{B_{s_1}\setminus\{0\}} (MY) = +\infty.
		\end{align*}
	\end{proof}
	
	In order to use the estimates coming from the proof of \cite[Theorem 4.1]{martino2023}, we will use the parametrization on a cylinder. Using the map $\Big( (t,\vp)\in[0,\infty)\times\s^1 \mapsto e^{t+i\vp} \in \C\setminus B_1 \Big) $, we obtain a sequence $\Phi_k : [0,\infty)\times\s^1 \to \R^3$. Thanks to \cref{lm:hyp_cvg}, all the assumptions \cite[Theorem 4.1]{martino2023} are satisfied. Let $(\Theta_k)_{k\in\N}\subset \Conf(\R^3)$ and $(M_k)_{k\in\N}\subset SO(3,1)$ be the resulting conformal transformation and the associated matrix at the scale $0$. Given $k\in\N$, by \eqref{eq:CGM_Phik} and \cref{lm:existence_oscillations_bp} with the choices $M=M_k$ and $s_1=r_k$,  there exists $\kappa_k >0$ such that
	\begin{align}\label{hyp:construction_sk}
		\osc_{[0,\kappa_k]\times\s^1} M_k Y_k = 1.
	\end{align} 
	Applying \cite[Theorem 4.1]{martino2023}, we conclude that $M_k Y_k$ converges to a light-like geodesic in the $C^2$-topology. We now summarize all the estimates we will use. First Claim 5.8 of \cite{martino2023} yields the existence of a cylinder 
	\begin{align}\label{def:choice_Ark}
		\Ar_k \subset \Cr_k := [0,\kappa_k]\times \s^1,
	\end{align}
	such that 
	\begin{align}
		& \osc_{\Ar_k} M_k Y_k \geq \frac{1}{2}, \label{eq:oscillations_Ak}\\
		& \|M_k Y_k \|_{L^\infty(\Ar_k)} \leq C. \label{eq:Linfty_Y}
	\end{align}
	Then, from Lemma 5.18 and Remark 5.16 in \cite{martino2023}, the following convergence holds: 
	\begin{align}
		& \left\| \frac{|\g^2 (M_kY_k)|_\xi}{|\g(M_k Y_k)|^2_\xi} \right\|_{L^\infty(\Ar_k)} \xrightarrow[k\to\infty]{}{0}, \label{eq:hessianY_k}\\
		& \left| \frac{ \inf_{x\in\Ar_k} |\g(M_k Y_k)(x)|_\xi }{ \sup_{y\in \Ar_k} |\g(M_k Y_k)(y)|_\xi } -1 \right| \xrightarrow[k\to\infty]{}{0}, \label{eq:harnack_Y}\\
		& \left\| \frac{ |\g Y_k|_\eta }{ |\g (M_k Y_k)|_\xi } \right\|_{L^\infty(\Ar_k)} \xrightarrow[k\to\infty]{}{0}. \label{eq:conv_lightlike}
	\end{align}
	Finally, by the estimates (22) and (23) in Lemma 5.5 from \cite{martino2023} it holds
	\begin{align}
		\forall (s_k,\vp_k)\in \Ar_k,\ \ \ &\|\g(M_k Y_k)\|_{L^\infty([s_k-1,s_k+1]\times\s^1)} \leq C\|\g \vn_{\Theta_k\circ \Phi_k} \|_{L^2([s_k-2,s_k+2]\times\s^1) }, \label{eq:ve_regu}\\
		& \sup_{k\in\N} \|\Theta_k\circ \Phi_k\|_{L^\infty(\Ar_k)} <\infty. \label{eq:Linfty_Phi}
	\end{align}
	From \eqref{eq:CGM_Phik} and \eqref{eq:hessianY_k}, we obtain a first estimate on $\scal{\dr^2_{zz} Y}{\dr^2_{zz} Y}_\eta$. Since $Y_k$ is conformal, it holds $\scal{\dr_z Y_k}{\dr_z Y_k}_\eta = 0$ and $\scal{\dr^2_{zz}Y_k}{\dr_z Y_k}_\eta=0$. Hence for $k$ large enough and any $(s_k,\vp_k) \in \Ar_k$, it holds
	\begin{align}
		\left( r_k e^{-s_k}\right)^4 \left|\scal{\dr^2_{zz} Y}{\dr^2_{zz} Y}_\eta\left(r_k e^{-s_k-i\vp_k} \right)\right| &= |\scal{\dr^2_{zz} Y_k}{\dr^2_{zz} Y_k}_\eta (s_k,\vp_k)| \nonumber\\
		&\leq |\g^2 M_kY_k(s_k,\vp_k)|^2_\xi \nonumber\\
		&\leq |\g (M_k Y_k)(s_k,\vp_k)|^4_\xi. \label{eq:estimate_Q}
	\end{align}
	By the $\ve$-regularity \eqref{eq:ve_regu}, we obtain
	\begin{align}\label{eq:ve_regularity}
		|\g (M_k Y_k)(s_k,\vp_k)|^4_\xi &\leq C\left( \int_{[s_k-1,s_k+1]\times\s^1} |\g \vn_{\Theta_k\circ \Phi_k}|^2_{g_{\Theta_k\circ \Phi_k}} d\vol_{g_{\Theta_k\circ \Phi_k}} \right)^2.
	\end{align}
	
	\begin{lemma}\label{lm:estimate_gvn}
		There exists $C>0$ satisfying the following. For any $\ve>0$, there exists $k_0\in\N$ depending on $\ve$, such that for every $k\geq k_0$ and $(s_k,\vp_k)\in\Ar_k$:
		\begin{align*}
			\int_{[s_k-1,s_k+1]\times\s^1} |\g \vn_{\Theta_k\circ \Phi_k}|^2_{g_{\Theta_k\circ \Phi_k}} d\vol_{g_{\Theta_k\circ \Phi_k}} &\leq \ve |\g (M_k Y_k)(s_k,\vp_k)|^2_\xi + \int_{[s_k-1,s_k+1]\times\s^1} |\Arond_{\Phi_k}|^2_{g_{\Phi_k}} d\vol_{g_{\Phi_k}}.
		\end{align*}
	\end{lemma}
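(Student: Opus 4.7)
My plan would start from the pointwise identity
$$|\g \vn_{\Phi}|^2_{g_\Phi} = 2 H_\Phi^2 + |\Arond_\Phi|^2_{g_\Phi},$$
which follows from \eqref{eq:gvn_H_Arond} by computing in an orthonormal frame of $g_\Phi$ and using $\tr_g \Arond = 0$ to cancel the cross terms. Applied to $\tilde\Phi_k := \Theta_k\circ\Phi_k$, integrated over the cylinder, and combined with the well-known conformal invariance of $|\Arond|^2_g\,dvol_g$ in dimension two codimension one, the lemma reduces to showing that for every $\ve>0$ there exists $k_0$ with
$$2\int_{[s_k-1,s_k+1]\times\s^1} H^2_{\tilde\Phi_k}\, dvol_{g_{\tilde\Phi_k}} \;\leq\; \ve\,|\g(M_kY_k)(s_k,\vp_k)|^2_\xi \qquad \text{for all } k\geq k_0.$$

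The next step is to exploit the fact that $H_{\tilde\Phi_k} = (M_kY_k)_5 - (M_kY_k)_4$ (immediate from \eqref{def:CGM}), together with \eqref{eq:Linfty_Y}, to obtain $\|H_{\tilde\Phi_k}\|_{L^\infty(\Ar_k)}\leq 2C$. It then suffices to establish the area bound
$$\mathrm{Area}\bigl([s_k-1,s_k+1]\times\s^1,\,g_{\tilde\Phi_k}\bigr) \;\leq\; \frac{\ve}{8C^2}\,|\g(M_kY_k)(s_k,\vp_k)|^2_\xi.$$

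For the area bound I would pass to the $\s^3$-picture via \eqref{eq:equality_CGM}, which is licit by \eqref{eq:Linfty_Phi} since the stereographic preimage $\tilde\Psi_k := \pi^{-1}\circ\tilde\Phi_k$ then stays in a compact subset of $\s^3\setminus\{N\}$. From \eqref{eq:gY_s3}, a direct computation using $|\tilde\Psi_k|^2 = 1$ and $\tilde\Psi_k\cdot\g\tilde\Psi_k = 0$ yields, in the conformal flat coordinates of the cylinder, the pointwise identity
$$|\g(M_kY_k)|^2_\xi - |\g Y_k|^2_\eta = 2|\dr H_{\tilde\Psi_k}|^2.$$
By \eqref{eq:conv_lightlike} this forces $|\dr H_{\tilde\Psi_k}|^2 \sim \tfrac12|\g(M_kY_k)|^2_\xi$ at leading order on $\Ar_k$. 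Combining this with \eqref{eq:hessianY_k} and \eqref{eq:harnack_Y} (which together say that $M_kY_k$ is asymptotically affine on $[s_k-1,s_k+1]\times\s^1$ at scale $|\g(M_kY_k)|_\xi$), and propagating the estimate through \eqref{eq:gY} together with $\|\tilde\Phi_k\|_{L^\infty}\leq C$, one extracts that the conformal factor $e^{\tilde\lambda_k}$ of $g_{\tilde\Phi_k}$ satisfies
$$\int_{[s_k-1,s_k+1]\times\s^1} e^{2\tilde\lambda_k}\,ds\,d\vp \;=\; o\bigl(|\g(M_kY_k)(s_k,\vp_k)|^2_\xi\bigr)\quad\text{as } k\to\infty,$$
which is precisely the desired area bound.

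The main obstacle will be this final decay estimate for the area. Intuitively, as the CGM collapses onto a $1$-dimensional light-like geodesic, the vector $\g(M_kY_k)$ becomes aligned with the "mean-curvature direction" and the $\Arond\cdot\g\tilde\Psi$ contribution in \eqref{eq:gY_s3}---which encodes the area through $e^{\tilde\lambda_k}$---is suppressed relative to $|\dr H_{\tilde\Psi_k}|^2$. Making this quantitative requires carefully combining all the convergence results \eqref{eq:hessianY_k}--\eqref{eq:Linfty_Phi} with the structural identities for the conformal Gauss map, without losing the $\ve$-smallness on each step.
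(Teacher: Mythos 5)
Your first reduction is exactly the paper's: the identity $|\g \vn|^2_{g} = 2H^2 + |\Arond|^2_{g}$, the conformal invariance of $\int |\Arond|^2_g\,d\vol_g$, and the $L^\infty$ bound on $H_{\Theta_k\circ\Phi_k}$ coming from \eqref{eq:Linfty_Y} correctly reduce the lemma to the area bound $\int_{[s_k-1,s_k+1]\times\s^1} d\vol_{g_{\Theta_k\circ\Phi_k}} = o\bigl(|\g(M_kY_k)(s_k,\vp_k)|^2_\xi\bigr)$, and the passage to the $\s^3$ picture via \eqref{eq:Linfty_Phi} is also the paper's move. Your pointwise identity $|\g(M_kY_k)|^2_\xi - |\g Y_k|^2_\eta = 2|\g H_{\tilde\Psi_k}|^2$ is correct (the lift $(\tilde\Psi_k,1)$ is $\eta$-null, $\xi$-norm $2$, and $\eta$-orthogonal as well as $\xi$-orthogonal to $\Arond\,\g\tilde\Psi_k$). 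But it does not yield the area bound, and this is precisely where your argument has a genuine gap rather than a deferred technicality. What that identity together with \eqref{eq:conv_lightlike} gives you is $|\Arond_{\tilde\Psi_k}\g\tilde\Psi_k|^2_\xi = e^{2\mu_k}|\Arond_{\tilde\Psi_k}|^2_{g}= o(|\g(M_kY_k)|^2_\xi)$, i.e.\ smallness of the \emph{product} of the area density $e^{2\mu_k}$ with $|\Arond|^2_g$. This says nothing about $e^{2\mu_k}$ itself: where the surface is nearly umbilic the left-hand side is small for free while the area density can be large. So "the $\Arond\cdot\g\tilde\Psi$ contribution is suppressed" is not the mechanism that controls the area.

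The paper's mechanism is different and is the actual content of the proof: the area enters through the \emph{zeroth-order} term of the conformal Gauss map, not through the $\Arond$ term. Since $\g(M_kY_k) = (\g H)(\tilde\Psi_k,1) + o(\cdot)$ by \eqref{eq:conv_lightlike}, reparametrizing by $ds = \bigl(\fint_{\s^1}|\g(M_kY_k)|^2_\xi\,d\vp\bigr)^{1/2}dt$ gives $\dr_s(M_kY_k) = \tfrac{1}{\sqrt2}(\tilde\Psi_k,1) + o(1)$. The pullback of $\eta$ by the null lift $\nu_k=(\tilde\Psi_k,1)$ \emph{is} the induced metric $g_{\omega\circ\Theta_k\circ\Phi_k}$, so the area form is comparable to $[\dr_s(M_kY_k)]^*\eta$, hence (Hadamard/Cauchy--Schwarz) bounded by $\int |\g(\dr_s M_kY_k)|^2_\xi$. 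Differentiating the reparametrized map produces one extra derivative, so this integral is controlled by $|\g^2(M_kY_k)|^2_\xi / \fint|\g(M_kY_k)|^2_\xi$, which is $o(|\g(M_kY_k)(s_k,\vp_k)|^2_\xi)$ by \eqref{eq:hessianY_k} and \eqref{eq:harnack_Y}. In short: the area is small because it is quadratic in the \emph{second} derivatives of the normalized conformal Gauss map, and \eqref{eq:hessianY_k} says those are negligible against the first derivatives. Your sketch never produces this extra derivative, and "propagating the estimate through \eqref{eq:gY}" does not supply it; as written, the final step you yourself flag as the main obstacle would fail.
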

	
	\begin{proof}
		We consider the representation of the Willmore surfaces in $\s^3$. Consider $\omega : \R^3\to \s^3 \setminus\{\text{north pole}\}$ the inverse of the stereographic projection. Let $(s_k,\vp_k)\in\Ar_k$. Using \eqref{eq:gvn_H_Arond} and \eqref{eq:Linfty_Y} (which provides a bound on the mean curvature), it holds 
		\begin{align}
			\int_{[s_k-1,s_k+1]\times\s^1} |\g \vn_{\Theta_k\circ \Phi_k}|^2_{g_{\Theta_k\circ \Phi_k}} d\vol_{g_{\Theta_k\circ \Phi_k}} &= \int_{[s_k-1,s_k+1]\times\s^1} 2H^2_{\Theta_k\circ \Phi_k} + |\Arond_{\Theta_k\circ \Phi_k}|^2_{g_{\Theta_k\circ \Phi_k}} d\vol_{g_{\Theta_k\circ \Phi_k}} \nonumber\\
			&\leq C\int_{[s_k-1,s_k+1]\times\s^1} d\vol_{g_{\Theta_k\circ \Phi_k}} + \int_{[s_k-1,s_k+1]\times\s^1} |\Arond_{\Phi_k}|^2_{g_{\Phi_k}} d\vol_{g_{\Phi_k}}. \label{eq:estimate_gn}
		\end{align}
		We focus on the area term. By definition of $\omega$, it holds
		\begin{align*}
			g_{\omega\circ\Theta_k\circ \Phi_k} = \frac{4}{(1+|\Theta_k\circ\Phi_k|^2)^2} g_{\Theta_k\circ\Phi_k}.
		\end{align*}
		By \eqref{eq:Linfty_Phi}, we obtain
		\begin{align}\label{eq:areaPhi}
			\int_{[s_k-1,s_k+1]\times\s^1} d\vol_{g_{\Theta_k\circ \Phi_k}} &\leq C \int_{[s_k-1,s_k+1]\times\s^1} d\vol_{g_{\omega\circ\Theta_k\circ \Phi_k}}.
		\end{align}
		Now we show that the metric $g_{\omega\circ\Theta_k\circ \Phi_k}$ is controlled by $Y_k$. Again by \eqref{eq:conv_lightlike} and \eqref{eq:gY_s3}, it holds
		\begin{align*}
			\dr_t (M_k Y_k) \ust{k\to\infty}{=} (\dr_t H_{\omega\circ\Theta_k\circ\Phi_k}) \nu_k + o(\dr_t H_{\omega\circ\Theta_k\circ\Phi_k}),
		\end{align*}
		where 
		\begin{align*}
			\nu_k := \begin{pmatrix}
				\omega\circ\Theta_k\circ \Phi_k\\ 1
			\end{pmatrix}.
		\end{align*}
		Thus, if we consider the change of variable $s=s(t)$ given by $ds = \left( \fint_{\s^1} |\g (M_k Y_k)|^2_\xi d\vp\right)^\frac{1}{2} dt$, we conclude that
		\begin{align*}
			\dr_s (M_k Y_k) \ust{k\to\infty}{=} \frac{\nu_k}{\sqrt{2}} + o(1).
		\end{align*}
		Therefore, the metric $g_{\omega\circ\Theta_k\circ \Phi_k} = \nu_k^*\eta$ is bounded by above and below by the metric $[\dr_s (M_k Y_k)]^*\eta$. Using the Cauchy\textendash Schwarz inequality for the Euclidean metric, we obtain:
		\begin{align*}
			|\scal{\dr_i (M_k Y_k)}{\dr_j (M_k Y_k)}_\eta| \leq |\dr_i (M_k Y_k)|_\xi |\dr_j(M_k Y_k)|_\xi.
		\end{align*}
		Hence, we deduce that:
		\begin{align}
			\int_{[s_k-1,s_k+1]\times\s^1} d\vol_{g_{\pi\circ\Theta_k\circ \Phi_k}} &\leq C\int_{[s_k-1,s_k+1]\times\s^1} \sqrt{\det \left( \scal{\dr_i (M_k Y_k)}{\dr_j (M_k Y_k)}_\eta\right)_{1\leq i,j\leq 2} } dt d\vp \nonumber \\
			&\leq C\int_{[s_k-1,s_k+1]\times\s^1}|\g (\dr_s M_k Y_k)|_\xi^2 dt d\vp. \label{eq:estimate_Area_ThetakPhik}
		\end{align}
		We estimate the right-hand side, using $\dr_s = \left( \fint_{\s^1} |\g (M_k Y_k)|^2_\xi d\vp\right)^{-\frac{1}{2}} \dr_t$:
		\begin{align*}
			|\g (\dr_s M_k Y_k)|_\xi &= \left| \g\left( \frac{\dr_t (M_k Y_k)}{\left( \fint_{\s^1} |\g (M_k Y_k)|^2_\xi d\vp \right)^{1/2} } \right) \right|_\xi \\ 
			&\leq \frac{|\g^2 (M_k Y_k)|_\xi}{ \left( \fint_{\s^1} |\g (M_k Y_k)|^2_\xi d\vp \right)^{1/2} } + \frac{|\dr_t (M_k Y_k)|_\xi }{ \left( \fint_{\s^1} |\g (M_k Y_k)|^2_\xi d\vp \right)^{3/2} } \left| \fint_{\s^1} \scal{\g(M_k Y_k)}{\g^2 (M_k Y_k)}_\xi d\vp \right|. \\
		\end{align*}
		By the Cauchy\textendash Schwarz inequality, we conclude that 
		\begin{align}
			|\g (\dr_s M_k Y_k)|_\xi &\leq \frac{|\g^2 (M_k Y_k)|_\xi}{ \left( \fint_{\s^1} |\g (M_k Y_k)|^2_\xi d\vp \right)^{1/2} } + \frac{ |\g(M_k Y_k)|_\xi }{ \fint_{\s^1} |\g (M_k Y_k)|^2_\xi d\vp  } \left( \fint_{\s^1} |\g^2 (M_k Y_k)|_\xi^2 d\vp \right)^\frac{1}{2}. \label{eq:estimate_gdsY}
		\end{align}
		Plugging \eqref{eq:harnack_Y} into \eqref{eq:estimate_gdsY} and \eqref{eq:estimate_Area_ThetakPhik}, we obtain
		\begin{align*}
			\int_{[s_k-1,s_k+1]\times\s^1} d\vol_{g_{\omega\circ\Theta_k\circ \Phi_k}} &\leq \frac{C}{ \fint_{\{s_k\}\times \s^1} |\g (M_k Y_k)|^2_\xi d\vp } \int_{[s_k-1,s_k+1]\times\s^1}|\g^2 M_k Y_k|_\xi^2 dt d\vp.
		\end{align*}
		By \eqref{eq:hessianY_k}, it holds
		\begin{align*}
			\int_{[s_k-1,s_k+1]\times\s^1} d\vol_{g_{\omega\circ\Theta_k\circ \Phi_k}} &\ust{k\to\infty}{=}o\left( \frac{1}{ \fint_{\{s_k\}\times \s^1} |\g (M_k Y_k)|^2_\xi d\vp } \int_{[s_k-1,s_k+1]\times\s^1}|\g M_k Y_k|_\xi^4 dt d\vp \right).
		\end{align*}
		Using again \eqref{eq:harnack_Y}, we conclude that for any $\vp_k\in\s^1$:
		\begin{align*}
			\int_{[s_k-1,s_k+1]\times\s^1} d\vol_{g_{\omega\circ\Theta_k\circ \Phi_k}} &\ust{k\to\infty}{=}o\left( \int_{[s_k-1,s_k+1]\times\s^1}|\g M_k Y_k|_\xi^2 dtd\vp \right) \\
			&\ust{k\to\infty}{=} o\left( |\g M_k Y_k(s_k,\vp_k)|_\xi^2 \right).
		\end{align*}
		We conclude by plugging this estimate into \eqref{eq:areaPhi} and \eqref{eq:estimate_gn}.
	\end{proof}
	
	We now finish the estimate of \eqref{eq:estimate_Q}. By \cref{lm:estimate_gvn} and \eqref{eq:ve_regularity}, we obtain for $k$ large enough,
	\begin{align*}
		|\g(M_k Y_k)(s_k,\vp_k)|^4_\xi &\leq C\left( \int_{[s_k-1,s_k+1]\times\s^1} |\Arond_{\Phi_k}|^2_{g_{\Phi_k}} d\vol_{g_{\Phi_k}} \right)^2 \\
		&\leq C\left( \int_{[s_k-1,s_k+1]\times\s^1} |A_{\Phi_k}|^2_{g_{\Phi_k}} d\vol_{g_{\Phi_k}} \right)^2.
	\end{align*}
	Since the right-hand side is a geometric quantity, we can change the parametrization of the integral. We come back to the parametrization on an annuli:
	\begin{align*}
		|\g(M_k Y_k)(s_k,\vp_k)|^4_\xi &\leq C\left( \int_{B_{2e^{s_k}}\setminus B_{e^{s_k}/2}} |A_{\Phi_k}|^2_{g_{\Phi_k}} d\vol_{g_{\Phi_k}} \right)^2.
	\end{align*}
	Since $\Phi_k$ is a change of parametrization of $\Phi$, we obtain
	\begin{align*}
		|\g (M_k Y_k)(s_k,\vp_k)|^4_\xi &\leq C\left( \int_{B_{2r_k e^{-s_k}}\setminus B_{r_ke^{-s_k}/2}} |A_{\Phi}|^2_{g_\Phi} d\vol_{g_{\Phi}} \right)^2\\
		&\leq C\left( \int_{B_{2r_k e^{-s_k}}\setminus B_{r_ke^{-s_k}/2}} |\g \vn_{\Phi}|^2_{g_\Phi} d\vol_{g_{\Phi}} \right)^2.
	\end{align*}
	Since $\Phi$ is conformal and the Dirichlet energy is conformally invariant, we obtain
	\begin{align*}
		|\g (M_k Y_k)(s_k,\vp_k)|^4_\xi &\leq C\left( \int_{B_{2r_k e^{-s_k}}\setminus B_{r_ke^{-s_k}/2}} |\g \vn_{\Phi}|^2 dz \right)^2.
	\end{align*}
	By \cite[Proposition 1.2]{bernard2013a}, it holds $\g \vn_\Phi \in L^p(B_1)$, for any $p>1$. Thus, if $z_k := r_k e^{-s_k}$, we obtain by \eqref{eq:estimate_Q}:
	\begin{align*}
		|z_k|^4\|\scal{\dr^2_{zz} Y}{\dr^2_{zz} Y}_\eta \|_{L^\infty\left( B_{2|z_k|}\setminus B_{\frac{|z_k|}{2}} \right)} &\leq C\left( |z_k|^{2-\frac{2}{p}} \left\| |\g \vn_\Phi|^2 \right\|_{L^p\left( B_{2|z_k|}\setminus B_{\frac{|z_k|}{2}} \right)} \right)^2.
	\end{align*}
	Chosing $p=4$, we obtain $2-\frac{2}{p} = \frac{3}{2}$. Hence, the above estimate becomes:
	\begin{align*}
		|z_k| \|\scal{\dr^2_{zz} Y}{\dr^2_{zz} Y}_\eta \|_{L^\infty \left( B_{2|z_k|}\setminus B_{\frac{|z_k|}{2}} \right)} &\leq C \left\| \g \vn_\Phi \right\|_{L^8 \left(B_{2|z_k|}\setminus B_{\frac{|z_k|}{2}} \right)}^4.
	\end{align*}
	Thus, we end up with the following asymptotic expansion:
	\begin{align*}
		|z_k| \|\scal{\dr^2_{zz} Y}{\dr^2_{zz} Y}_\eta \|_{L^\infty\left( B_{2|z_k|}\setminus B_{\frac{|z_k|}{2}} \right)} \ust{k\to\infty}{=}o(1).
	\end{align*}
	Hence, $\Qr$ has no pole of order 1 or more: $\Qr$ is bounded across the singularity.

	\bibliographystyle{plain}
	\bibliography{branched_sphere}
\end{document}